\newcommand{\F}{\mathbb{F}_q}
\newcommand{\Ev}{\mathrm{Ev}}
\newcommand{\FF}{\mathcal{F}}
\newenvironment{proof}[1][Proof]{\textbf{#1.} }{\ \rule{0.5em}{0.5em}}
\newcommand{\qed}{\hfill \ensuremath{\Box}}
\newtheorem{theorem}{Theorem}
\newtheorem{corollary}[theorem]{Corollary}
\newtheorem{definition}[theorem]{Definition}
\newtheorem{remark}[theorem]{Remark}
\newtheorem{lemma}[theorem]{Lemma}
\newtheorem{proposition}[theorem]{Proposition}
\newtheorem{example}[theorem]{Example}
\begin{document}

\title{Bounding the number of points on a curve using a generalization of Weierstrass semigroups\thanks{This work was supported in part by the Danish FNU grant 272-07-0266, the Danish National Research Foundation and the National Science Foundation of China (Grant No.11061130539) for the Danish-Chinese Center for Applications of Algebraic Geometry in Coding Theory and Cryptography and by Spanish grant MTM2007-64704.}}%

\author{Peter Beelen\footnote{DTU-Mathematics, Technical University of Denmark, Matematiktorvet, Building 303, 2800 Kgs. Lyngby, Denmark \texttt{P.Beelen@mat.dtu.dk}} \and Diego Ruano\footnote{Department of Mathematical Sciences, Aalborg University, Fr. Bajersvej 7G, 9220 Aalborg {\O}st, Denmark. \texttt{diego@math.aau.dk}}}

\maketitle   

\begin{abstract}
In this article we use techniques from coding theory to derive upper bounds for the number of rational places of the function field of an algebraic curve defined over a finite field. The used techniques yield upper bounds if the (generalized) Weierstrass semigroup \cite{BeTu} for an $n$-tuple of places is known, even if the exact defining equation of the curve is not known. As shown in examples, this sometimes enables one to get an upper bound for the number of rational places for families of function fields. Our results extend results in \cite{GeMa}.
\end{abstract}

\section{Introduction}

Let $\F$ be the finite field with $q$ elements and $\FF / \F$ be a function field \cite{St} of an algebraic curve $\mathcal C$ defined over $\F$. We denote by $N(\FF)$ the number of rational places of $\FF$ and by $g(\FF)$ its genus. Even when the defining equation of $\mathcal C$ is known explicitly, it can be useful to have a priori upper bounds for $N(\FF)$. If only partial information is available about the curve $\mathcal C$, it is often still possible to give an upper bound on the number of rational places of $\FF$. One such upper bound is the well-known Hasse--Weil upper bound, stating that $N(\FF) \le q+1+2g(\FF)\sqrt{q}$. To use this upper bound, one only needs to know (an upper bound for) the genus of $\FF$. In \cite[Theorem 1]{GeMa} another type of an a priori upper bound is given, which assumes the knowledge of the Weierstrass semigroup $H(P_1)$ of a rational place $P_1$ of $\FF$: $$N(\FF) \le \# \left ( H(P_1) \setminus (q H^*(P_1) + H(P_1))  \right) +1,$$
where  $q H^*(P_1) + H(P_1) = \{ q \lambda + \lambda' ~ | ~ \lambda, \lambda' \in H(P_1), \lambda \neq 0 \}$.
One may rightly ask how often the situation arises in which one does not know the exact equation of $\mathcal C$, but one does know a Weierstrass semigroup. However, we will show in examples that having only some information on the defining equation sometimes is enough to compute the bound in \cite{GeMa} as well as our generalized bounds (see Example \ref{ex:second}). In order to extend the Geil--Matsumoto bound, we will in Section \ref{se:ggm} consider the Weierstrass semigroup defined by several rational places \cite{BeTu}. In section \ref{se:seco}, we estimate the size of certain subsets of the set of rational places. This second estimation can lead to a sharper estimate of the total number of rational places. As done in \cite{GeMa}, one may change viewpoint and use the bounds obtained in this article to obtain information about the kind of (generalized) semigroups that may occur when one assumes that the function field has many rational places. This is also the point of view taken in Example \ref{ex:second}, where it is shown that a certain family of function fields of genus $6$ cannot improve upon known records from \cite{ManyPoints}.

The main techniques to prove our results come from coding theory. More precisely, we consider AG-codes constructed by evaluating functions from a Riemann-Roch space $L(G)$ (for suitable divisors $G$) in rational places of $\FF$. The length of the resulting code is given by the number of rational places used in this construction. Usually, the rational places are fixed and one is interested in determining the minimum distance of the code. In this article, modifying an idea from \cite{GeMa}, we estimate the dimension of the code. Since the dimension of a code cannot exceed its length, this gives information about the number of rational places the function field $\FF$ can have.

\section{A generalization of the Geil--Matsumoto bound}\label{se:ggm}

In this section we will present a first generalization of the Geil--Matsumoto bound \cite{GeMa}. Let $P_1,\dots,P_n$ be $n$ rational places of a given function field $\mathcal F$ and denote by $\mathcal Q$  the set of $N({\mathcal F})-n$ remaining rational places. Note though that in the next section, $\mathcal Q$ will in general denote a subset of these $N({\mathcal F})-n$ places. For an $n$-tuple ${\bf i}=(i_1,\dots,i_n) \in \mathbb{Z}^n$ we write $\deg({\bf i})=\sum_{j=1}^n i_j$ and $L({\bf i})=L(\sum_{j=1}^n i_jP_j)$. Further we will denote with ${\bf e}_j$ the $n$-tuple all of whose coordinates are $0$, except the $j$-th one, which is assumed to be $1$. Then one has for example that $L(\lambda{\bf e}_j)=L(\lambda P_j)$.

\begin{definition}\label{def:hi}
Given ${\bf i} \in \mathbb{Z}^n$, we define $$H_{\bf i}(P_j)=\{-v_{P_j}(f) \quad | \quad f \in \cup_{k \in \mathbb{Z}}L({\bf i}+k{{\bf e}_j})\backslash\{0\}\}$$
\end{definition}

\begin{remark}
\begin{enumerate}
\item We have $H_{\bf 0}(P_j)=H(P_j)$, where ${\bf 0}$ denotes the $n$-tuple consisting of zeroes only.
\item Note that the set $H_{\bf i}(P_j)$ does not depend on the $j$-th coordinate of $\bf i$.
\item We remark that  $L({\bf i}+k{\bf e}_j)=\{0\}$, for $k < -\deg({\bf i})$, so it also holds that $$H_{\bf i}(P_j)=\{-v_{P_j}(f) \quad | \quad f \in \cup_{k \ge -\deg({\bf i})}L({\bf i}+k{\bf e}_j)\backslash\{0\}\}.$$
\item Sets such as $H_{\bf i}(P_j)$ were also mentioned in \cite{genorderbound}, where they were used to compute lower bounds on the minimum distances of certain algebraic geometry codes. In \cite{genorderbound} it is also explained how to compute these sets. They are closely related to the generalized Weierstrass semigroups introduced in \cite{BeTu}.

\end{enumerate}
\end{remark}

With this notation in place, we define the following functions:

\begin{definition}\label{def:negligible}
Let ${\bf i}\in \mathbb{Z}^n$ and let $j$ be an integer between $1$ and $n$. If $L({\bf i}) = L({\bf i}+{\bf e}_j)$ or if there exists $\lambda \in H(P_j)\backslash\{0\}$ and $\mu \in H_{\bf i}(P_j)$ such that $\mu + q\lambda = {\bf i}_j+1$, we call the pair $({\bf i},{\bf i}+{\bf e}_j)$ negligible. Further we define
$$
\delta({\bf i},{\bf i}+{\bf e}_j)=\left\{\begin{array}{rcl}
0  & & \makebox{ if the pair $({\bf i},{\bf i}+{\bf e}_j)$ is negligible,}\\
1  & & \makebox{ otherwise. }
\end{array}\right.
$$
\end{definition}

\begin{lemma}\label{lem:reduce}
Let $({\bf i},{\bf i}+{\bf e}_j)$ be a negligible pair such that $L({\bf i}) \subsetneq L({\bf i}+{\bf e}_j)$, and write $\mu + q\lambda = {\bf i}_j+1$ for $\lambda \in H(P_j)\backslash\{0\}$ and $\mu \in H_{\bf i}(P_j)$. Then there exist $f \in L(\lambda {\bf e}_j)$ and $g \in L({\bf i})$ such that $f^q g \in L({\bf i}+{\bf e}_j)\backslash L({\bf i})$.
\end{lemma}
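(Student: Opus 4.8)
The plan is to construct the desired function $f^qg$ explicitly by choosing $f$ and $g$ to realize the prescribed pole orders $\lambda$ and $\mu$ at $P_j$, and then to verify by a direct valuation computation that the product lands in $L({\bf i}+{\bf e}_j)\setminus L({\bf i})$. The hypothesis $L({\bf i})\subsetneq L({\bf i}+{\bf e}_j)$ rules out the first alternative in Definition \ref{def:negligible}, so negligibility must come from the relation $\mu+q\lambda={\bf i}_j+1$ with $\lambda\in H(P_j)\setminus\{0\}$ and $\mu\in H_{\bf i}(P_j)$, which is exactly the data we are handed.

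First I would produce the two building blocks. Since $\lambda$ lies in the Weierstrass semigroup $H(P_j)$ and is nonzero, there is a function $f$ whose only pole is at $P_j$ and whose pole order there is exactly $\lambda$; that is, $f\in L(\lambda{\bf e}_j)$ with $v_{P_j}(f)=-\lambda$ and $v_{P_l}(f)\ge 0$ for $l\neq j$. Similarly, by the definition of $H_{\bf i}(P_j)$ there is a nonzero $g\in\cup_k L({\bf i}+k{\bf e}_j)$ with $-v_{P_j}(g)=\mu$, so that $v_{P_l}(g)\ge -{\bf i}_l$ for all $l\neq j$ and $v_{P_j}(g)=-\mu$.

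The key observation, and the step I expect to be the real crux, is the inequality $\mu\le{\bf i}_j-1$. Since $\lambda\ge 1$ and $q\ge 2$, the relation $\mu={\bf i}_j+1-q\lambda$ forces $\mu\le{\bf i}_j+1-q\le{\bf i}_j-1<{\bf i}_j$. This is precisely what makes the lemma work: it guarantees $v_{P_j}(g)=-\mu\ge-{\bf i}_j$, and since the valuations of $g$ at the remaining places $P_l$ are already bounded below by $-{\bf i}_l$, we conclude that $g$ actually lies in $L({\bf i})$. Without this inequality one could only place $g$ in the larger space $L({\bf i}+(\mu-{\bf i}_j){\bf e}_j)$, and the required conclusion $g\in L({\bf i})$ would fail.

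Finally I would compute the valuations of the nonzero function $f^qg$. For $l\neq j$, additivity of the valuation together with $v_{P_l}(f)\ge 0$ gives $v_{P_l}(f^qg)=q\,v_{P_l}(f)+v_{P_l}(g)\ge v_{P_l}(g)\ge-{\bf i}_l$, so every constraint away from $P_j$ is satisfied. At $P_j$, additivity gives $v_{P_j}(f^qg)=q\,v_{P_j}(f)+v_{P_j}(g)=-q\lambda-\mu=-({\bf i}_j+1)$, so $f^qg$ has pole order exactly ${\bf i}_j+1$ at $P_j$. Hence $f^qg\in L({\bf i}+{\bf e}_j)$ while $f^qg\notin L({\bf i})$, which is the claim.
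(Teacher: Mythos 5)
Your proof is correct and follows essentially the same route as the paper: pick $f$ with pole divisor exactly $\lambda P_j$, pick $g$ realizing the pole order $\mu$, and verify by valuation additivity that $f^qg$ has pole order exactly ${\bf i}_j+1$ at $P_j$ while respecting the bounds elsewhere. In fact you make explicit the one step the paper glosses over, namely that $\mu = {\bf i}_j+1-q\lambda \le {\bf i}_j-1$ (since $\lambda\ge 1$, $q\ge 2$) is what guarantees the function realizing $\mu\in H_{\bf i}(P_j)$ actually lies in $L({\bf i})$.
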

\begin{proof}
Since $\lambda \in H(P_j)$, there exists a function $f \in L({\bf e}_j)$ whose pole divisor equals $(f)_\infty=\lambda P_j$. Similarly there exists a function $g \in L({\bf i})$ such that $(g) \ge -\sum_{k=0}^n i_k P_k$ and $-v_{P_k}(g)=\mu$. This implies that $-v_{P_j}(f^qg)=q\lambda+\mu={\bf i}_j+1$ and also that $(f^q g) \ge -P_j-\sum_{k=0}^n i_k P_k$. Together these imply that $f^qg \in L({\bf i}+{\bf e}_j)\backslash L({\bf i})$ as desired. \qed
\end{proof}

A pair $({\bf i},{\bf i}+{\bf e}_j)$ is negligible if $\deg({\bf i})$ is large enough. More precisely, one has:

\begin{proposition}\label{prop:finite}
Let ${\bf i}\in \mathbb{Z}^n$ and let $j$ be an integer between $1$ and $n$ and assume that $\deg({\bf i}) \ge (q+2)(g(\mathcal F)+1)-3$. Then the pair $({\bf i},{\bf i}+{\bf e}_j)$ is negligible.
\end{proposition}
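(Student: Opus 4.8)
The plan is to dispose of the trivial case and then, in the remaining case, exhibit explicitly a decomposition ${\bf i}_j+1 = \mu + q\lambda$ of the required form. If $L({\bf i}) = L({\bf i}+{\bf e}_j)$ the pair is negligible by definition, so I would assume $L({\bf i}) \subsetneq L({\bf i}+{\bf e}_j)$. Any $f \in L({\bf i}+{\bf e}_j)\setminus L({\bf i})$ then satisfies $v_{P_\ell}(f) \ge -i_\ell$ for $\ell \neq j$ while necessarily $-v_{P_j}(f) = {\bf i}_j+1$, since the $j$-th coordinate is the only one that changed; hence ${\bf i}_j+1 \in H_{\bf i}(P_j)$. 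The goal is then to ``peel off'' a term $q\lambda$ from ${\bf i}_j+1$ and land back inside $H_{\bf i}(P_j)$.

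The key structural input I would establish first is a cofinality statement for $H_{\bf i}(P_j)$ coming from Riemann--Roch. Writing $d = \sum_{\ell\neq j} i_\ell = \deg({\bf i})-{\bf i}_j$, membership $m \in H_{\bf i}(P_j)$ is equivalent to $\ell(D_m) > \ell(D_{m-1})$, where $D_m = \sum_{\ell\neq j} i_\ell P_\ell + m P_j$ has degree $d+m$. As soon as $\deg(D_{m-1}) = d+m-1 \ge 2g(\mathcal F)-1$, Riemann--Roch forces this difference to equal $1$, so every integer $m \ge 2g(\mathcal F)-d$ lies in $H_{\bf i}(P_j)$. With this in hand I would take $\lambda$ to be the smallest nonzero element of $H(P_j)$, which satisfies $\lambda \le g(\mathcal F)+1$ because at most $g(\mathcal F)$ of the integers $1,\dots,g(\mathcal F)+1$ can be Weierstrass gaps. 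Setting $\mu := {\bf i}_j+1-q\lambda$, the hypothesis $\deg({\bf i}) \ge (q+2)(g(\mathcal F)+1)-3$ rearranges to $\deg({\bf i})-2g(\mathcal F)+1 \ge q(g(\mathcal F)+1) \ge q\lambda$, and a short computation in which the ${\bf i}_j$-terms cancel gives $\mu \ge 2g(\mathcal F)-d$. Thus $\mu \in H_{\bf i}(P_j)$ by the cofinality statement, and ${\bf i}_j+1 = \mu + q\lambda$ with $\lambda \in H(P_j)\setminus\{0\}$ and $\mu \in H_{\bf i}(P_j)$ exhibits the pair as negligible.

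The main obstacle, and the reason the constant comes out exactly as stated, is that $H_{\bf i}(P_j)$ is insensitive to ${\bf i}_j$ whereas the target ${\bf i}_j+1$ is not; a priori ${\bf i}_j$ is unconstrained even when $\deg({\bf i})$ is large, so one cannot hope to bound the target directly. The resolution is to push $\mu$ far enough to the right that it is automatically a non-gap of $H_{\bf i}(P_j)$, at which point the offending ${\bf i}_j$ cancels between $\mu$ and the threshold $2g(\mathcal F)-d$, leaving a condition purely on $\deg({\bf i})$. Establishing the multiplicity bound $\lambda \le g(\mathcal F)+1$ is then precisely what produces the coefficient $(q+2)(g(\mathcal F)+1)-3$, since $\deg({\bf i})-2g(\mathcal F)+1 \ge q\lambda$ must hold for this choice of $\lambda$.
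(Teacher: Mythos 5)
Your proof is correct and follows essentially the same route as the paper's: both arguments pick a nonzero $\lambda \in H(P_j)$ with $\lambda \le g(\mathcal F)+1$ (using that $H(P_j)$ has exactly $g(\mathcal F)$ gaps) and then apply the Riemann--Roch theorem to the divisor of degree $\deg({\bf i})+1-q\lambda \ge 2g(\mathcal F)$ to conclude that $\mu = {\bf i}_j+1-q\lambda$ lies in $H_{\bf i}(P_j)$. Your ``cofinality'' reformulation of that Riemann--Roch step, with the explicit observation that ${\bf i}_j$ cancels because $H_{\bf i}(P_j)$ is independent of the $j$-th coordinate, is just a repackaging of the paper's direct argument.
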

\begin{proof}
Suppose that $\deg({\bf i}) \ge (q+2)(g(\mathcal F)+1)-3$. Since this implies in particular that $\deg({\bf i})\ge 2g(\mathcal F)-1$, the theorem of Riemann--Roch implies that $L({\bf i}) \subsetneq L({\bf i}+{\bf e}_j)$. Since the semigroup $H(P_j)=\{0,\lambda,\dots\}$ has exactly $g(\mathcal F)$ gaps, there exists an integer $\lambda \in H(P_j)\backslash\{0\}$ with $\lambda \le g(\mathcal F)+1$. Therefore $\deg({\bf i}+(1 - q \lambda){\bf e}_j) \ge 2g(\mathcal F)$, so applying the theorem of Riemann--Roch again, we see that there exists a function $g \in L({\bf i}+(1 - q \lambda){\bf e}_j)$ such that $-v_{P_j}(g)={\bf i}_j+1-q\lambda$. By Definition \ref{def:hi}, we see that ${\bf i}_j+1-q\lambda \in H_{\bf i}(P_j)$. By Definition \ref{def:negligible} the proposition now follows, since $({\bf i}_j+1-q\lambda)+q\lambda={\bf i}_j+1$. \qed
\end{proof}

Actually we showed the following more precise result:

\begin{corollary}\label{cor:howfar}
Let $\lambda_j$ denote the smallest nonzero element of $H(P_j)$. Then the pair $({\bf i},{\bf i}+{\bf e}_j)$ is negligible if $\deg({\bf i}) \ge q \lambda_j+2g(\mathcal F)-1$.
\end{corollary}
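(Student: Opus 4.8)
The plan is to reuse the argument of Proposition \ref{prop:finite} essentially verbatim, but to track the actual value $\lambda_j$ of the smallest nonzero pole number at $P_j$ instead of bounding it crudely by $g(\mathcal F)+1$ via gap-counting. Since $\lambda_j \le g(\mathcal F)+1$ always holds (the semigroup $H(P_j)$ has exactly $g(\mathcal F)$ gaps, so $\lambda_j - 1 \le g(\mathcal F)$), the bound $q\lambda_j + 2g(\mathcal F)-1$ is never larger than the bound $(q+2)(g(\mathcal F)+1)-3 = q(g(\mathcal F)+1)+2g(\mathcal F)-1$ appearing in the proposition, so the corollary is genuinely a sharpening. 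To prove it I only need to verify the second (existence) clause of Definition \ref{def:negligible}, namely that there exist $\lambda \in H(P_j)\backslash\{0\}$ and $\mu \in H_{\bf i}(P_j)$ with $\mu + q\lambda = {\bf i}_j + 1$; the natural choice is $\lambda = \lambda_j$.

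The key computation is to consider the divisor associated with ${\bf i}+(1-q\lambda_j){\bf e}_j$, whose degree is $\deg({\bf i})+1-q\lambda_j$. Here the hypothesis is exactly what is needed: rearranging $\deg({\bf i}) \ge q\lambda_j + 2g(\mathcal F)-1$ gives $\deg({\bf i})+1-q\lambda_j \ge 2g(\mathcal F)$. I would then compare this Riemann--Roch space with the one obtained by subtracting one further copy of $P_j$, i.e. the space attached to ${\bf i}-q\lambda_j{\bf e}_j$, whose degree is $\ge 2g(\mathcal F)-1$. Both degrees lie in the range where Riemann--Roch holds as an equality, and the two dimensions therefore differ by exactly one. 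Consequently there exists a function $g$ lying in $L({\bf i}+(1-q\lambda_j){\bf e}_j)$ but not in $L({\bf i}-q\lambda_j{\bf e}_j)$, that is, with $-v_{P_j}(g) = {\bf i}_j + 1 - q\lambda_j$.

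Setting $\mu = {\bf i}_j + 1 - q\lambda_j$, Definition \ref{def:hi} gives $\mu \in H_{\bf i}(P_j)$ (the witnessing function $g$ lies in $L({\bf i}+k{\bf e}_j)$ for $k = 1 - q\lambda_j$), while $\mu + q\lambda_j = {\bf i}_j+1$ with $\lambda_j \in H(P_j)\backslash\{0\}$. By Definition \ref{def:negligible} the pair $({\bf i},{\bf i}+{\bf e}_j)$ is then negligible, as desired. The only delicate point is the existence step: one must ensure that degree $\ge 2g(\mathcal F)$ forces a function whose pole order at $P_j$ equals the full coefficient ${\bf i}_j+1-q\lambda_j$, rather than something strictly smaller. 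This is precisely the standard dimension count just described, so I expect no real obstacle; the corollary is simply the proposition's proof carried out with $\lambda_j$ kept explicit throughout.
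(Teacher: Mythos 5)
Your proposal is correct and is essentially identical to the paper's own justification: the paper states Corollary \ref{cor:howfar} as a direct by-product of the proof of Proposition \ref{prop:finite}, which works verbatim once the crude bound $\lambda \le g(\mathcal F)+1$ is replaced by the actual smallest nonzero element $\lambda_j$, exactly as you do. Your Riemann--Roch dimension count producing $g$ with $-v_{P_j}(g)={\bf i}_j+1-q\lambda_j$, hence $\mu={\bf i}_j+1-q\lambda_j \in H_{\bf i}(P_j)$ with $\mu+q\lambda_j={\bf i}_j+1$, is the same argument, and your observation that only the existence clause of Definition \ref{def:negligible} needs checking is also sound.
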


Now we come to the main theorem.

\begin{theorem}\label{thm:main}
Define $M=(q+2)(g(\mathcal F)+1)-3$ and let ${\bf i}^{(-1)},\dots,{\bf i}^{(M)}$ be a sequence of $n$-tuples such that:
\begin{enumerate}
\item $\deg({\bf i}^{(-1)})=-1$,
\item for any $k$ there exists a $j$ such that ${\bf i}^{(k)}-{\bf i}^{(k-1)}={\bf e}_j$.
\end{enumerate}
Then $N(\mathcal F) \le n+\sum_{k=0}^{M} \delta({\bf i}^{(k-1)},{\bf i}^{(k)})$.
\end{theorem}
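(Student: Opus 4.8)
The plan is to realize $\sum_{k=0}^M \delta(\mathbf{i}^{(k-1)},\mathbf{i}^{(k)})$ as an upper bound for the dimension of an evaluation image, and then to identify that dimension with $|\mathcal{Q}| = N(\mathcal F)-n$. Let $\mathcal{Q}$ be the set of the remaining rational places and consider, for each $n$-tuple $\mathbf{i}$, the evaluation map $\Ev\colon L(\mathbf{i}) \to \F^{\mathcal{Q}}$ sending $f$ to $(f(Q))_{Q\in\mathcal{Q}}$; this is well defined since functions in $L(\mathbf{i})$ have poles only among $P_1,\dots,P_n$ and are therefore regular at every $Q\in\mathcal{Q}$. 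Writing $W_k = \Ev(L(\mathbf{i}^{(k)}))$, the hypotheses give a nested chain $W_{-1}\subseteq W_0\subseteq\dots\subseteq W_M$ of subspaces of $\F^{\mathcal{Q}}$, and since $\mathbf{i}^{(k)}=\mathbf{i}^{(k-1)}+\mathbf{e}_j$ for some $j$, the theorem of Riemann--Roch forces $\dim L(\mathbf{i}^{(k)})-\dim L(\mathbf{i}^{(k-1)}) \le 1$, so each quotient $W_k/W_{k-1}$ has dimension at most $1$.

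The heart of the argument is to show $\dim W_k - \dim W_{k-1} \le \delta(\mathbf{i}^{(k-1)},\mathbf{i}^{(k)})$ for every $k$; equivalently, that a negligible pair produces no growth of the evaluation image. If $L(\mathbf{i}^{(k-1)})=L(\mathbf{i}^{(k)})$ this is immediate. Otherwise I would invoke Lemma \ref{lem:reduce} to obtain $f\in L(\lambda\mathbf{e}_j)$ and $g\in L(\mathbf{i}^{(k-1)})$ with $f^q g \in L(\mathbf{i}^{(k)})\setminus L(\mathbf{i}^{(k-1)})$, so $f^q g$ spans the one-dimensional quotient $L(\mathbf{i}^{(k)})/L(\mathbf{i}^{(k-1)})$. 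The key observation is that evaluation at a rational place commutes with the $q$-th power, since $f(Q)^q = f(Q)$ for all $Q\in\mathcal{Q}$, whence $\Ev(f^q g)=\Ev(fg)$. A short valuation computation, using $\mu+q\lambda=\mathbf{i}^{(k-1)}_j+1$ with $\lambda\ge 1$ and $q\ge 2$, shows that $fg$ has pole order $\lambda+\mu=\mathbf{i}^{(k-1)}_j+1-(q-1)\lambda \le \mathbf{i}^{(k-1)}_j$ at $P_j$ and no worse poles than $g$ elsewhere, so $fg\in L(\mathbf{i}^{(k-1)})$. Therefore $\Ev(f^q g)=\Ev(fg)\in W_{k-1}$, and since $f^q g$ spans the quotient we conclude $W_k=W_{k-1}$, as desired.

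Granting this, telescoping yields $\dim W_M = \dim W_M - \dim W_{-1} \le \sum_{k=0}^M \delta(\mathbf{i}^{(k-1)},\mathbf{i}^{(k)})$, where $\dim W_{-1}=0$ because $\deg(\mathbf{i}^{(-1)})=-1<0$ forces $L(\mathbf{i}^{(-1)})=\{0\}$. It then remains to prove the matching lower bound $\dim W_M = |\mathcal{Q}|$, i.e. that $\Ev(L(\mathbf{i}^{(M)}))=\F^{\mathcal{Q}}$. For this I would extend the chain past $\mathbf{i}^{(M)}$: every pair $(\mathbf{i},\mathbf{i}+\mathbf{e}_j)$ with $\deg(\mathbf{i})\ge M$ is negligible by Proposition \ref{prop:finite}, so by the previous paragraph no such step enlarges the evaluation image; consequently $\Ev(L(\mathbf{i}))=W_M$ for every $\mathbf{i}\ge\mathbf{i}^{(M)}$ coordinatewise, and hence for a divisor $c\sum_j P_j$ with $c$ large. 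On the other hand, for $c$ large the evaluation map $L(c\sum_j P_j)\to\F^{\mathcal{Q}}$ is surjective by Riemann--Roch, since its degree exceeds $|\mathcal{Q}|+2g(\mathcal F)-1$. Combining the two gives $W_M=\F^{\mathcal{Q}}$ and $\dim W_M=|\mathcal{Q}|=N(\mathcal F)-n$, so $N(\mathcal F)=n+\dim W_M\le n+\sum_{k=0}^M\delta(\mathbf{i}^{(k-1)},\mathbf{i}^{(k)})$.

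I expect the main obstacle to be this last step --- confirming that the evaluation image attached to the end of the finite chain already equals the entire space $\F^{\mathcal{Q}}$. This is where Proposition \ref{prop:finite} is essential: it guarantees that all higher-degree steps are negligible and therefore invisible to the evaluation map, so that truncating at degree $M$ loses nothing. The Frobenius identity $f(Q)^q=f(Q)$ is the conceptual engine of the whole proof, and it is the reason why the condition $\mu+q\lambda=\mathbf{i}_j+1$ in the definition of negligibility is exactly the right one.
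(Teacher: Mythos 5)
Your proof is correct and follows essentially the same route as the paper's: bound the growth of the evaluation images $\dim W_k$ step by step via Lemma \ref{lem:reduce} together with the Frobenius identity $\Ev(f^q g)=\Ev(fg)$, then use Proposition \ref{prop:finite} to extend the chain beyond degree $M$ without growth and conclude surjectivity (hence $\dim W_M = N(\mathcal F)-n$) from Riemann--Roch. The only cosmetic difference is that the paper increases a single coordinate (${\bf i}^{(M)}+l{\bf e}_j$) to reach a divisor of large degree, whereas you increase all coordinates to $c\sum_j P_j$; both serve the same purpose.
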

\begin{proof}
Note that by the properties of the divisor sequence, we have $\deg({\bf i}^{(k)})=k$ for any $-1 \le k \le M$. For any divisor $G$ with support disjoint from $\mathcal Q$, we introduce the following notation:
$$
\begin{array}{rcl}
\Ev_{\mathcal Q}: L(G) & \to      & \F^{N(\mathcal F)-n}\\
\\
                    f  &  \mapsto & (f(Q))_{Q\in {\mathcal Q}}
\end{array}
$$
and $C_{\mathcal Q}(G)=\Ev_{\mathcal Q}(L(G)).$ For an $n$-tuple ${\bf i}$, we define $$C_{\mathcal Q}({\bf i})=\Ev_{\mathcal Q}(L({\bf i})).$$

We will begin the proof of the theorem by showing the following three claims:
\begin{enumerate}
\item For any divisor $G$ of degree $\deg(G) \ge N(\mathcal F)-n+2g({\mathcal F})-1$, it holds that $C_{\mathcal Q}(G)=\F^{N({\mathcal F})-n}.$
\item For any $k\ge 0$ we have $\dim(C_{\mathcal Q}({\bf i}^{(k)})) \le \dim(C_{\mathcal Q}({\bf i}^{(k-1)}))+\delta({\bf i}^{(k-1)},{\bf i}^{(k)})$.
\item $\dim(C_{\mathcal Q}({\bf i}^{(-1)}))=0.$
\end{enumerate}

The first claim follows from a standard argument: the kernel of the evaluation map $\Ev_{\mathcal Q}: L(G) \to \F^{N(\mathcal F)-n}$ is given by $L(G-\sum_{Q\in{\mathcal Q}}Q)$. Therefore we get $\dim(C_{\mathcal Q}(G))=\dim(L(G))-\dim(L(G-\sum_{Q\in{\mathcal Q}}Q))$. Using the assumption $\deg(G) \ge N(\mathcal F)-n+2g({\mathcal F})-1$ and the theorem of Riemann--Roch, this expression simplifies to $N({\mathcal F})-n$.

The second claim is trivial if $\delta({\bf i}^{(k-1)},{\bf i}^{(k)})=1$, so we may assume that $\delta({\bf i}^{(k-1)},{\bf i}^{(k)})=0$. Since by assumption there exists $j$ such that ${\bf i}^{(k)}={\bf i}^{(k-1)}+{\bf e}_j$, we may apply Lemma \ref{lem:reduce} to conclude that there exist $f \in L(\lambda {\bf e}_j)$ for some $\lambda>0$ and $g \in L({\bf i}^{(k-1)})$ such that $f^q g \in L({\bf i}^{(k)})\backslash L({\bf i}^{(k-1)})$. On the level of codes this means that the code $C_{\mathcal Q}({\bf i}^{(k)})$ is generated as a vector space by the vectors of $C_{\mathcal Q}({\bf i}^{(k-1)})$ and the vector $\Ev_{\mathcal Q}(f^q g)$. However, since the codes are defined over $\F$, we have $\Ev_{\mathcal Q}(f^q g)=\Ev_{\mathcal Q}(f g)$. On the other hand, since $\lambda>0$, we see that $fg \in L({\bf i}^{(k-1)})$ and therefore that $\Ev_{\mathcal Q}(f g) \in C_{\mathcal Q}({\bf i}^{(k-1)})$. The second claim now follows.

The third claim is clear, since $L(G)=\{0\}$ for any divisor of negative degree.

\bigskip
\noindent
From the last two parts of the claim we find inductively that $$\dim( C_{\mathcal Q}({\bf i}^{(M)})) \le \sum_{k=0}^{M} \delta({\bf i}^{(k-1)},{\bf i}^{(k)}).$$ On the other hand, combining a similar reasoning and Proposition \ref{prop:finite}, we find that $$\dim( C_{\mathcal Q}({\bf i}^{(M)}))=\dim( C_{\mathcal Q}({\bf i}^{(M)}+l{\bf e}_j))$$ for any $j$ and any natural number $l$. From this and the first claim we can conclude that $$\dim( C_{\mathcal Q}({\bf i}^{(M)}))=N({\mathcal F})-n.$$ The theorem now follows. \qed
\end{proof}

\noindent
The above proof is inspired by the proof of the Geil--Matsumoto bound \cite{GeMa}. If $n=1$, the above theorem reduces to their result: If $n=1$, the only choice for the sequence ${\bf i}^{(-1)},\dots,{\bf i}^{(M)}$ is $-1,0,\dots,M$. For $n>1$, there are more possibilities. In fact, we obtain a weighted oriented graph given by the lattice with vertices $\{-1, \ldots , M \}^n$ and edges $( {\bf i} , {\bf i}+{\bf e}_j )$, with weights $w( {\bf i} , {\bf i}+{\bf e}_j ) =  \delta({\bf i},{\bf i}+{\bf e}_j)$, for ${\bf i} \in \{-1, \ldots , M \}^n$ and $j=1, \ldots , n$ such that $i_j \neq M$. In practice, we consider the bound from Corollary \ref{cor:howfar} instead of the bound $M$ in Theorem \ref{thm:main}. We do not need to consider the whole lattice, but can start with a one-dimensional lattice and increase its size progressively. Then we just find an optimal sequence ${\bf i}^{(-1)},\dots,{\bf i}^{(M)}$, by finding a path from a vertex with degree $-1$ to a vertex with degree $M$ with minimum weight (using Dijkstra's algorithm \cite{Dijkstra}, which computes a path with lowest weight between a particular vertex of a graph and every other vertex of that graph).

\begin{example} \label{ex:klein}
In this example we consider the function field of the Klein quartic over $\mathbb{F}_8$ which has genus three. It can be described as $\FF_1/ \mathbb{F}_8=\mathbb{F}_8(x,y) / \mathbb{F}_8$, where $x^3 y + y^3 + x = 0$. Of course it is well-known how many rational places this function field has (namely $24$) and it should be noted that the only purpose of this example is to illustrate the theory.

There are three rational places occurring as poles and/or zeroes of the functions $x$ and $y$. We will denote these by $P_1$, $P_2$ and $P_3$. More precisely we choose them such that the following identities of divisors hold: $$(x)= 3 P_1 -P_2 -2P_3 \makebox{ and } (y) = P_1 + 2P_2 -3P_3.$$  From this, one can show that $H=H(P_1)=H(P_2)=H(P_3)=\langle 3, 5, 7 \rangle$ and
\begin{equation}\label{eq:LspaceKlein}
L(i_1 P_1 + i_2 P_2 + i_3 P_3) = \langle x^{\alpha} y^{\beta}  ~ | ~ 3\alpha + \beta \ge - i_1, -\alpha + 2 \beta \ge -i_2, -2 \alpha -3 \beta \ge - i_3  \rangle.
\end{equation}
Actually, one can prove that all rational places of the Klein quartic have the same Weierstrass semigroup. The Geil--Matsumoto bound gives $N(\FF_1) \le 1+ 24= 25$ in this case, since $H \setminus (8 H^* + H) = \{ 0, 3, 5, 6, \ldots, 23, 25, 26, 28 \}$.

We now compute the bound from Theorem \ref{thm:main}, where we will consider $n=2$, and $P_1$, $P_2$ as above. It is enough to consider a sequence of $n$-tuples $({\bf i}^{(-1)},\dots,{\bf i}^{(29)})$, since $({\bf i},{\bf i}+{\bf e}_j)$ is negligible if $\deg({\bf i}) \ge 8\cdot 3 +2 \cdot 3-1=29$ (Corollary \ref{cor:howfar}). As before we represent the divisor $P_1$, resp. $P_2$ by ${\bf e}_1$, resp. ${\bf e}_2$ and write $${\bf i}^k = (i_1^{(k)} , i_2^{(k)} ) = i_1 ^{(k)} {\bf e}_1 + i_2^{(k)} {\bf e}_2.$$

We computed a oriented graph as above, given by the $\{-1,\ldots 29 \} \times \{0,\ldots 4 \}$ lattice, with weights given by $\delta({\bf i},{\bf i}+{\bf e}_j)$ and got a path with minimum weight given by

$$
\left\{ \begin{array}{lcl}
{\bf i}^{(k)}=(k,0), & & \makebox{for $k=-1,\ldots, 23$,}\\
{\bf i}^{(23+k)}=(24,k-1),  & & \makebox{for $k=1,\ldots, 3$,}\\
{\bf i}^{(26+k)}=(25,k+1), & & \makebox{for $k=1,\ldots, 3$.}\\
\end{array} \right.
$$
Then $\{k\ge 0 ~ | ~ \delta({\bf i}^{(k-1)},{\bf i}^{(k)})=1 \} = \{0,3,5,6,\ldots,23,25 \}$, which implies that $N(\mathcal F) \le 2 + 22 =24$.
\end{example}

The Geil--Matsumoto bound is an improvement to the gonality bound, sometimes called Lewittes' bound \cite{Le}, $$N(\FF) \le q \lambda_1 +1,$$where $\lambda_1$ denotes the smallest non-zero element of $H$. In the above example these bounds give rise to the same upper bound for $N(\FF)$. The following proposition explains this phenomenon. We introduce the Ap\'ery set of a numerical semigroup \cite{Ap,RoGa}, which is the main tool for this result. For $e \in H$, the Ap\'ery set of $H$ relative to $e$ is defined to be $\mathrm{Ap}(H,e) = \{ \lambda \in H | \lambda - e \notin H\}$. One has that $\mathrm{Ap}(H,e)$ is $\{w_0=0, w_1, \ldots, w_{e-1}  \}$, where $w_i$ is the smallest element of $H$ congruent with $i$ modulo $e$, for $i=0,\ldots ,e-1$. Moreover, for $\lambda \in H$ there exist  a unique $i$ and $k$, with $i \in \{0 , \ldots, e-1 \}$ and $k \in \mathbb{N}_0$, such that $\lambda = w_i + k e$. Thus we have the disjoint union $$H = \bigcup_{i=0}^{e-1} \{ w_i + e\mathbb{N}_0  \},$$ in particular $\{e,  w_1, \ldots, w_{e-1} \}$ generates $H$.

\begin{proposition}\label{prop:GM=L}
Let $e \in H$ and $\lambda_1$ the smallest non-zero element of $H$, then $$\#(H \setminus (e H^* + H )) =e\lambda_1.$$ In particular the bounds in \cite{GeMa,Le} give the same result if $q \in H$.
\end{proposition}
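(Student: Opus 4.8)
The plan is to exploit the Apéry decomposition $H = \bigcup_{i=0}^{e-1}\{w_i + e\mathbb{N}_0\}$ recalled just above, and to reduce the counting problem to a clean description of which elements of $H$ lie in $eH^* + H$. Concretely, I would write an arbitrary element of $H$ uniquely as $w_i + ke$ with $i \in \{0,\dots,e-1\}$ and $k \in \mathbb{N}_0$, and then establish the characterization
$$w_i + ke \in eH^* + H \iff k \ge \lambda_1.$$
Granting this, the complement $H \setminus (eH^* + H)$ is exactly $\{w_i + ke \mid 0 \le i \le e-1,\ 0 \le k \le \lambda_1 - 1\}$, and by uniqueness of the Apéry representation these are $e\cdot\lambda_1$ pairwise distinct elements, which yields the claimed count.

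The main work is the displayed characterization, which I expect to be the crux. For the implication ``$\Leftarrow$'', if $k \ge \lambda_1$ I would simply write $w_i + ke = e\lambda_1 + (w_i + (k-\lambda_1)e)$; here $\lambda_1 \in H^*$ and $w_i + (k-\lambda_1)e \in H$ because $k - \lambda_1 \ge 0$, so the element lies in $eH^* + H$. For ``$\Rightarrow$'', suppose $w_i + ke = e\lambda + \lambda'$ with $\lambda \in H^*$ and $\lambda' \in H$. Reducing modulo $e$ forces $\lambda' \equiv i \pmod e$, so by the Apéry structure $\lambda' = w_i + me$ for some $m \in \mathbb{N}_0$; substituting and cancelling $w_i$ gives $ke = e\lambda + me$, i.e. $k = \lambda + m \ge \lambda \ge \lambda_1$.

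The subtlety to verify before this argument makes sense is that $e\lambda \in H$ for $\lambda \in H^*$, which holds since $e\lambda$ is $e$ added to itself $\lambda$ times and $H$ is closed under addition; this also guarantees $eH^* + H \subseteq H$, so the counting genuinely takes place inside $H$. Finally, for the ``in particular'' statement, if $q \in H$ I would take $e = q$ in the identity just proved, so that the Geil--Matsumoto bound reads $N(\FF) \le \#(H \setminus (qH^* + H)) + 1 = q\lambda_1 + 1$, which is precisely Lewittes' bound; hence the two bounds coincide whenever $q \in H$.
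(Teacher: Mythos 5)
Your proof is correct and follows essentially the same route as the paper: both rest on the Ap\'ery decomposition $H=\bigcup_{i=0}^{e-1}\{w_i+e\mathbb{N}_0\}$, and your characterization $w_i+ke\in eH^*+H \iff k\ge\lambda_1$ is precisely the paper's identity $eH^*+H=e\lambda_1+H$ written out in Ap\'ery coordinates. The only difference is that where the paper invokes a known semigroup lemma for the final count $\#(H\setminus(e\lambda_1+H))=e\lambda_1$, you prove that count directly from the uniqueness of the Ap\'ery representation, which makes your argument self-contained.
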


\begin{proof}
Let $\mathrm{Ap}(H,e) = \{ w_0=0, w_1, \ldots, w_{e-1} \}$ be the Ap\'ery set of $H$ relative to $e \in H$. For any $\lambda \in H$ we have
$$e\lambda+H=\bigcup_{i=0}^{e-1}(e\lambda+w_i+e\mathbb{N}_0)=\bigcup_{i=0}^{e-1}(w_i+e\mathbb{N}_{\ge \lambda}),$$ where $\mathbb{N}_{\ge \lambda}$ denotes the set of natural numbers greater than or equal to $\lambda$. This implies that for $\lambda<\mu \in H$ we have $e\lambda+H \supset e\mu+H$ and thus
$$e H^* + H=e\lambda_1+H,$$ with $\lambda_1$ the smallest element of $H^*$. The proposition now follows, since the equality $\#(H \setminus ( e \lambda_1 + H))=e\lambda_1$ is a well-known result for semigroups, see \cite[Chapter 10, Lemma 5.15]{HoLiPe}.
\qed
\end{proof}

The Weierstrass semigroup of Example \ref{ex:klein} contains $q=8$, the number of elements of the base field. Therefore, both bounds in \cite{GeMa,Le} give the same result. Namely, we have $e=q=8$ and $w_0=0$, $w_1=9$, $w_2=10$, $w_3=3$, $w_4=12$, $w_5=5$, $w_6=6$, and $w_7=7$.

\begin{remark}\label{rem:converseGM=L}
The converse statement, namely that (in the notation of Proposition \ref{prop:GM=L}) $\#(H \setminus (e H^* + H )) =e\lambda_1$ implies that $e\in H$, is not necessarily true. Consider for example the semigroup $\{0,2,4,5,6,\dots\}$ generated by $2$ and $5$ and suppose that $e=3$.

On the other hand, for semigroups $H$ generated by $m$ and $m+1$, with $m$ a natural number, this converse does hold: Suppose that $\#(H \setminus (e H^* + H )) =em$, then $e(m+1) \in em+H$ (by \cite[Chapter 10, Lemma 5.15]{HoLiPe}), which would imply that $e=e(m+1)-em\in H$.
\end{remark}

We conclude this section with an example that shows that the above techniques also can be used when only partial information is given about the defining equation of the function field.

\begin{example}\label{ex:second}
In this example we consider the function field $\FF_2/ \mathbb{F}_8=\mathbb{F}_8(x,y) / \mathbb{F}_8$, where $x$ and $y$ are related by an equation of the form
\begin{equation}\label{eq:defeq}
\alpha y^4+\beta y+x^5 + \sum_{(i,j)\in\Delta}a_{i,j}x^iy^j = 0,
\end{equation}
where $\alpha$ and $\beta$ are nonzero elements in $\mathbb{F}_8$ and $a_{i,j}$ are arbitrary elements in $\mathbb{F}_8$. Moreover $\Delta=\{(i,j)\in \mathbb{Z}^2 ~ | ~ 4i+5j<20,\ i \ge 0, \ 5j+i > 5\}$. Another way of stating the structure of the defining equation is that its Newton polygon is a triangle with vertices $(0,0)$, $(5,0)$ and $(0,4)$. We will assume that the genus of $\FF_2$ equals $6$, which amounts to saying if we interpret Equation (\ref{eq:defeq}) as a defining equation of a curve, then this curve does not have any singularities.

Like for the Klein quartic in the previous example, we can derive information about the divisors for $x$ and $y$ from the defining equation. In fact, denoting by $P_1$ the common zero of $x$ and $y$ and by $P_2$ the unique pole of $x$ (and $y$), we have $$(x)\ge P_1 - 4 P_2 \makebox{ and } (y)=5(P_1-P_2).$$ Using the assumption that $g(\FF_2)=6$, one can show that this implies
\begin{equation}\label{eq:Lspacesecondexample}
L(i_1 P_1 + i_2 P_2) = \langle x^{\alpha} y^{\beta}  ~ | ~ \alpha + 5 \beta \ge - i_1, -4\alpha -5 \beta \ge -i_2, i_1\ge 0 \rangle.
\end{equation}
In particular, we see that the semigroup of $P_2$ (and in fact also $P_1$) is generated by $4$ and $5$. Since $8 \in H(P_1)$, we see that the bound from \cite{GeMa} gives $33$ for this example. Any function field of genus $6$ defined over $\mathbb{F}_8$ can have at most $34$ points (i.e. $N_8(6) \le 34$), while it is also known that $N_8(6) \ge 33$ \cite{ManyPoints}. Based on this, one may hope that for a clever choice of the coefficients $\alpha,\beta$ and the $a_{i,j}$ one can find a function field defined by an equation of the form as in (\ref{eq:defeq}) with $33$ rational places. However, it turns out that using Theorem \ref{thm:main} with
$$
\left\{ \begin{array}{lcl}
{\bf i}^{(k)}=(k,0), & & \makebox{for $k=-1,\ldots, 34$,}\\
{\bf i}^{(34+k)}=(34,k),  & & \makebox{for $k=1,\ldots, 3$,}\\
{\bf i}^{(37+k)}=(34+k,3), & & \makebox{for $k=1,\ldots, 3$,}\\
{\bf i}^{(40+k)}=(37,3+k), & & \makebox{for $k=1,\ldots, 3$.}\\
\end{array} \right.
$$
that $N(\FF_2) \le 2 + 29 =31$. Note that we do not have to describe more values of ${\bf i}^{(k)}$ by Corollary \ref{cor:howfar}.
\end{example}

\section{A second generalization of the Geil--Matsumoto bound}\label{se:seco}

In this section we will generalize the previous results by estimating the size of certain subsets of the set of rational places. Contrary to the previous section, we will therefore in this section by $\mathcal Q$ denote some subset of the set of all rational places not containing any of the places $P_1,\dots,P_n$. The results from the previous section can be refined in this setup. Further we define $T=\mathbb{F}_q \backslash \{0\}$ for convenience.

\begin{definition}\label{def:Tnegligible}
Let ${\bf i}\in \mathbb{Z}^n$ and let $j$ be an integer between $1$ and $n$. We call the pair $({\bf i},{\bf i}+{\bf e}_j)$ $T$-negligible if either $L({\bf i}) = L({\bf i}+{\bf e}_j)$ or if

\begin{enumerate}
\item there exists $\lambda \in H(P_j)\backslash\{0\}$ and $\mu \in H_{\bf i}(P_j)$ such that $\mu + (q-1)\lambda = {\bf i}_j+1$ and
\item for this $\lambda$ there exists $f \in L(\lambda P_j)\backslash L((\lambda-1)P_j)$ such that $f(Q) \in T$ for all $Q \in {\mathcal Q}$.
\end{enumerate}

Further we define
$$
\delta_T({\bf i},{\bf i}+{\bf e}_j)=\left\{\begin{array}{rcl}
0  & & \makebox{ if the pair $({\bf i},{\bf i}+{\bf e}_j)$ is $T$-negligible,}\\
1  & & \makebox{ otherwise. }
\end{array}\right.
$$
\end{definition}

Note that depending on the choice of $\mathcal Q$, the function $\delta_T$ may change. Strictly speaking we should therefore include $\mathcal Q$ in the notation for this function, but for the sake of simplicity, we will not do this.

\begin{lemma}\label{lem:Treduce}
Let $({\bf i},{\bf i}+{\bf e}_j)$ be a $T$-negligible pair such that $L({\bf i}) \subsetneq L({\bf i}+{\bf e}_j)$ and write $\mu + (q-1)\lambda = {\bf i}_j+1$ for $\lambda \in H(P_j)\backslash\{0\}$ and $\mu \in H_{\bf i}(P_j)$. Then there exist $f \in L(\lambda {\bf e}_j)$ and $g \in L({\bf i})$ such that $f^{q-1} g \in L({\bf i}+{\bf e}_j)\backslash L({\bf i})$ and such that moreover $f(Q) \in T$ for all $Q \in {\mathcal Q}$.
\end{lemma}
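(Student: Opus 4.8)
The plan is to follow the structure of the proof of Lemma \ref{lem:reduce} closely, since the $T$-negligible condition differs from the ordinary negligible condition only in replacing $q$ by $q-1$ in the relation $\mu+(q-1)\lambda={\bf i}_j+1$, and in adding the extra requirement (condition 2 of Definition \ref{def:Tnegligible}) that there be a function of exact pole order $\lambda$ at $P_j$ taking values in $T=\F\setminus\{0\}$ on all of $\mathcal Q$. First I would unpack the hypotheses: by condition 2 of $T$-negligibility there is a function $f\in L(\lambda P_j)\setminus L((\lambda-1)P_j)$, i.e.\ $f\in L(\lambda{\bf e}_j)$ with $(f)_\infty=\lambda P_j$, such that $f(Q)\in T$ for every $Q\in\mathcal Q$. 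This $f$ is exactly the one the lemma asserts exists, so the value condition $f(Q)\in T$ on $\mathcal Q$ is handed to us directly and requires no further work.

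Next I would produce the function $g$. Since $\mu\in H_{\bf i}(P_j)$, Definition \ref{def:hi} guarantees a function $g\in\cup_{k}L({\bf i}+k{\bf e}_j)\setminus\{0\}$ with $-v_{P_j}(g)=\mu$; as in Lemma \ref{lem:reduce} one may arrange that $g\in L({\bf i})$, so that $(g)\ge-\sum_k i_k P_k$ and $-v_{P_j}(g)=\mu$. The central computation is then the valuation of $f^{q-1}g$ at $P_j$: we have
$$-v_{P_j}(f^{q-1}g)=(q-1)\lambda+\mu={\bf i}_j+1,$$
using the relation from the hypothesis. At every other place $P_k$ with $k\neq j$, the function $f$ has no poles (its pole divisor is concentrated at $P_j$), so $f^{q-1}g$ inherits from $g$ the bound $(f^{q-1}g)\ge-\sum_{k\neq j}i_kP_k-({\bf i}_j+1)P_j$, i.e.\ $(f^{q-1}g)\ge-{\bf i}-{\bf e}_j$. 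Combined with the pole order exactly ${\bf i}_j+1$ at $P_j$, this shows $f^{q-1}g\in L({\bf i}+{\bf e}_j)\setminus L({\bf i})$, which is the remaining assertion.

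I do not anticipate a serious obstacle here, as the argument is essentially a transcription of Lemma \ref{lem:reduce} with the single arithmetic change $q\mapsto q-1$. The one point deserving care is that the function $f$ is now \emph{not} an arbitrary element realizing the pole order $\lambda$, but the specific $f$ supplied by condition 2 of Definition \ref{def:Tnegligible}, whose defining property is precisely that $f(Q)\in T$ for all $Q\in\mathcal Q$; one must use this \emph{same} $f$ throughout so that the conclusion $f(Q)\in T$ holds for the very function appearing in $f^{q-1}g$. This matters because in the eventual dimension estimate (the analogue of claim 2 in the proof of Theorem \ref{thm:main}) one will replace $\Ev_{\mathcal Q}(f^{q-1}g)$ by $\Ev_{\mathcal Q}(g)$ using that $f(Q)^{q-1}=1$ for $f(Q)\in\F^*$, and this Fermat-type identity $f(Q)^{q-1}=1$ is exactly what the value condition $f(Q)\in T$ is there to guarantee. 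Verifying that the $f$ from the definition indeed lies in $L(\lambda{\bf e}_j)$ with the full pole order (rather than a smaller one) is immediate from $f\notin L((\lambda-1)P_j)$, so no additional hypothesis is needed.
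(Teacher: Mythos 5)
Your proposal is correct and follows essentially the same route as the paper's own proof: take the specific $f$ with exact pole order $\lambda$ at $P_j$ and $f(Q)\in T$ on $\mathcal Q$ supplied by condition 2 of Definition \ref{def:Tnegligible}, take $g\in L({\bf i})$ with $-v_{P_j}(g)=\mu$, and verify $-v_{P_j}(f^{q-1}g)=(q-1)\lambda+\mu={\bf i}_j+1$ together with the divisor bound to conclude $f^{q-1}g\in L({\bf i}+{\bf e}_j)\backslash L({\bf i})$. Your added remark on why the same $f$ must be carried through (so that $f(Q)^{q-1}=1$ can later be used in the code-dimension argument) is exactly the role this lemma plays in Theorem \ref{thm:Tmain}.
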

\begin{proof}
Since $\lambda \in H(P_j)$, there exists a function $f \in L({\bf e}_j)$ whose pole divisor equals $(f)_\infty=\lambda P_j$. By Definition \ref{def:Tnegligible} we can choose a function $f$ such that $f(Q) \in T$ for all $Q \in {\mathcal Q}$. Similarly there exists a function $g \in L({\bf i})$ such that $(g) \ge -\sum_{k=0}^n i_k P_k$ and $-v_{P_j}(g)=\mu$. This implies that $-v_{P_j}(f^{q-1}g)=(q-1)\lambda+\mu={\bf i}_j+1$ and also that $(f^{q-1} g) \ge -(q-1)\lambda P_j-\sum_{j=0}^n i_j P_j$. Together these imply that $f^{q-1}g \in L({\bf i}+{\bf e}_j)\backslash L({\bf i})$ as desired.\qed
\end{proof}

A pair $({\bf i},{\bf i}+{\bf e}_j)$ is negligible if $\deg({\bf i})$ is large enough. More precisely, one has:

\begin{proposition}\label{prop:Tfinite}
Let ${\bf i}\in \mathbb{Z}^n$ and let $j$ be an integer between $1$ and $n$. Define $\Lambda=\#{\mathcal Q}+2g({\mathcal F})$ and $M_T=(q-1)\Lambda+2g(\mathcal F)-1$. Then any pair $({\bf i},{\bf i}+{\bf e}_j)$ satisfying $\deg({\bf i}) \ge M_T$ is $T$-negligible.
\end{proposition}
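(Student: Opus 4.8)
The plan is to mirror the structure of the proof of Proposition \ref{prop:finite}, but to replace the two uses of Riemann--Roch with more careful versions that respect the extra condition (2) in Definition \ref{def:Tnegligible}. Recall that to show $({\bf i},{\bf i}+{\bf e}_j)$ is $T$-negligible we must produce $\lambda \in H(P_j)\backslash\{0\}$ and $\mu \in H_{\bf i}(P_j)$ with $\mu + (q-1)\lambda = {\bf i}_j+1$, \emph{and} such that the corresponding $f \in L(\lambda P_j) \backslash L((\lambda-1)P_j)$ can be taken to avoid all the zeros in $\mathcal Q$ (i.e.\ $f(Q)\in T$ for every $Q \in \mathcal Q$). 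First I would note that $\deg({\bf i}) \ge M_T \ge 2g(\mathcal F)-1$, so Riemann--Roch gives $L({\bf i}) \subsetneq L({\bf i}+{\bf e}_j)$ exactly as before.

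The key new step is the choice of $\lambda$. In Proposition \ref{prop:finite} one simply used that $H(P_j)$ has $g(\mathcal F)$ gaps to find a nonzero $\lambda \le g(\mathcal F)+1$; here one additionally needs that there is a function $f$ with pole divisor exactly $\lambda P_j$ that does not vanish at any $Q \in \mathcal Q$. The natural way to force this is a dimension count: the functions in $L(\lambda P_j)\backslash L((\lambda-1)P_j)$ that \emph{do} vanish at some fixed $Q\in\mathcal Q$ lie in $L(\lambda P_j - Q)$, a subspace of codimension at most one. More usefully, one wants a single $f$ avoiding all of $\mathcal Q$ simultaneously, which is guaranteed once $\lambda$ is large enough that $\dim L(\lambda P_j) > \#\mathcal Q$ while $\dim L(\lambda P_j) > \dim L((\lambda-1)P_j)$; for $\lambda \ge \#\mathcal Q + 2g(\mathcal F) = \Lambda$ Riemann--Roch makes $\lambda$ a nongap and forces the evaluation map $L(\lambda P_j)\to \F^{\#\mathcal Q}$ to be surjective, so one can select $f \in L(\lambda P_j)\backslash L((\lambda-1)P_j)$ with $f(Q)\in T$ for all $Q\in\mathcal Q$. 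Thus taking $\lambda = \Lambda$ (or the smallest nongap that is $\ge \Lambda$) does the job, and this is precisely why $\Lambda = \#\mathcal Q + 2g(\mathcal F)$ enters the bound.

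With $\lambda$ fixed as above, the remaining step is to exhibit $\mu \in H_{\bf i}(P_j)$ with $\mu = {\bf i}_j + 1 - (q-1)\lambda$. Following the earlier argument, I would check that $\deg({\bf i} + (1-(q-1)\lambda){\bf e}_j) = \deg({\bf i}) - (q-1)\lambda \ge 2g(\mathcal F)$, which holds precisely because $\deg({\bf i}) \ge M_T = (q-1)\Lambda + 2g(\mathcal F)-1 \ge (q-1)\lambda + 2g(\mathcal F)-1$; a final application of Riemann--Roch then produces $g \in L({\bf i}+(1-(q-1)\lambda){\bf e}_j)$ with $-v_{P_j}(g) = {\bf i}_j + 1 - (q-1)\lambda$, so that $\mu := {\bf i}_j+1-(q-1)\lambda \in H_{\bf i}(P_j)$ by Definition \ref{def:hi}. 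Since $\mu + (q-1)\lambda = {\bf i}_j+1$, both conditions of Definition \ref{def:Tnegligible} are met and the pair is $T$-negligible.

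The main obstacle I anticipate is the simultaneous-avoidance argument for $f$: one must be careful that the surjectivity of the evaluation map onto $\F^{\#\mathcal Q}$ genuinely yields a function with \emph{nonzero} value at every $Q$ while \emph{also} having pole order exactly $\lambda$ at $P_j$ (not smaller). The cleanest route is to argue that the subset of $L(\lambda P_j)$ of functions either vanishing at some $Q$ or lying in $L((\lambda-1)P_j)$ is a finite union of proper subspaces (here it is important that $\F$ be large enough, or that one works over the closure and then descends, which is why $\Lambda$ is chosen so that the target space strictly exceeds the number of constraints); since a vector space over $\F$ is not a finite union of proper subspaces when the field is large enough relative to the number of subspaces, a suitable $f$ exists. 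Verifying this non-covering condition, and confirming that the stated $\Lambda$ and $M_T$ give exactly the slack needed, is where the real care is required; the rest is a routine repetition of the Riemann--Roch bookkeeping from Proposition \ref{prop:finite}.
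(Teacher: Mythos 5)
Your overall skeleton is the same as the paper's: you take $\lambda=\Lambda$, use Riemann--Roch once to get $L({\bf i})\subsetneq L({\bf i}+{\bf e}_j)$ and once more to produce $g$ with $-v_{P_j}(g)={\bf i}_j+1-(q-1)\Lambda$, so that $\mu={\bf i}_j+1-(q-1)\Lambda\in H_{\bf i}(P_j)$; that half is correct and identical to the paper. The genuine gap is in the step you yourself flag as the main obstacle: producing $f\in L(\Lambda P_j)\backslash L((\Lambda-1)P_j)$ with $f(Q)\in T$ for all $Q\in\mathcal Q$. You propose to avoid the finite union of proper subspaces $L((\Lambda-1)P_j)\cup\bigcup_{Q\in\mathcal Q}L(\Lambda P_j-Q)$ by a non-covering argument. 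Over a finite field this argument fails: a vector space over $\F$ can perfectly well be a finite union of proper subspaces (already $\F^2$ is the union of its $q+1$ lines), and the non-covering lemma only applies when the number of subspaces is at most $q$. Here you must avoid $\#\mathcal Q+1$ subspaces, and in every interesting application $\#\mathcal Q\ge q$ (in the paper's Klein quartic example, $\#\mathcal Q=21$ and $q=8$), so ``the field is large enough relative to the number of subspaces'' is precisely the hypothesis you do not have. The suggestion to work over the algebraic closure and descend does not repair this: the conditions $f(Q)\in T$ concern $\F$-rational values of an $\F$-rational function, and no descent mechanism is indicated.

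The paper closes this step by a different and decisive device. Since $\deg((\Lambda-1)P_j)=\#\mathcal Q+2g(\mathcal F)-1$, the first claim in the proof of Theorem \ref{thm:main} (applied with $\mathcal Q$ the chosen subset) shows that the evaluation map $\Ev_{\mathcal Q}:L((\Lambda-1)P_j)\to\F^{\#\mathcal Q}$ is already surjective on the \emph{smaller} space. One then fixes any $f_0\in L(\Lambda P_j)\backslash L((\Lambda-1)P_j)$, which exists because $\Lambda\ge 2g(\mathcal F)$ is a nongap of $H(P_j)$, and chooses $h\in L((\Lambda-1)P_j)$ with $\Ev_{\mathcal Q}(h)=(1,\dots,1)-\Ev_{\mathcal Q}(f_0)$. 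The function $f=f_0+h$ still has pole order exactly $\Lambda$ at $P_j$, since $h$ has strictly smaller pole order, and satisfies $f(Q)=1\in T$ for every $Q\in\mathcal Q$. This additive correction is exactly why $\Lambda$ is defined as $\#\mathcal Q+2g(\mathcal F)$: the point is not that $L(\Lambda P_j)$ evaluates surjectively, but that $L((\Lambda-1)P_j)$ does, so the values can be adjusted without disturbing the pole order. It works over any finite field with no restriction on $\#\mathcal Q$; substituting it for your covering argument makes your proof complete.
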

\begin{proof}
Suppose that $\deg({\bf i}) \ge M_T.$ Since then in particular $\deg({\bf i})\ge 2g(\mathcal F)-1$, it follows from the theorem of Riemann--Roch that $L({\bf i}) \subsetneq L({\bf i}+{\bf e}_j)$.  Also note that $\deg({\bf i}+(1 - (q-1)\Lambda){\bf e}_j) \ge 2g(\mathcal F)$, so applying the theorem of Riemann--Roch again, we see that there exists a function $g \in L({\bf i}+(1 -(q-1)\Lambda){\bf e}_j)$ such that $-v_{P_j}(g)={\bf i}_j+1-(q-1)\Lambda$.
By Definition \ref{def:hi}, we see that ${\bf i}_j+1-(q-1)\Lambda \in H_{\bf i}(P_j)$.

Since the largest gap of the semigroup $H(P_j)$ is at most $2g({\mathcal F})-1$, the number $\Lambda$ is not a gap of $H(P_j)$. This means that there exists a function $f \in L(\Lambda P_j)$ such that $-v_{P_j}(f)=\Lambda$. We cannot conclude yet from Definition \ref{def:Tnegligible} that the pair $({\bf i},{\bf i}+{\bf e}_j)$ is $T$-negligible, since $f$ could have a zero among the places in $\mathcal Q$. However, from the proof of Theorem \ref{thm:main} and the definition of $\Lambda$ we see that for any $j$ the evaluation map $\Ev_{\mathcal Q}: L((\Lambda-1) P_j) \to \F^{\#{\mathcal Q}}$ is surjective. Therefore, we can always choose $f \in L(\Lambda P_j)\backslash L((\Lambda-1) P_j)$ such that $f(Q)\in T$ for all $Q\in{\mathcal Q}$.\qed
\end{proof}

The $M_T$ given in this proposition can be very large. Under some additional conditions, we can obtain better results.

\begin{proposition}\label{prop:Tfinite2}
Let ${\bf i}\in \mathbb{Z}^n$ and let $j$ be an integer between $1$ and $n$. Suppose that for any $\lambda \in H(P_j)$ there exists $f \in L(\lambda P_j)\backslash L((\lambda-1)P_j)$ such that $f(Q) \in T$ for all $Q \in {\mathcal Q}$. If $\deg({\bf i}) \ge (q+1)(g(\mathcal F)+1)-3$, then the pair $({\bf i},{\bf i}+{\bf e}_j)$ is $T$-negligible.
\end{proposition}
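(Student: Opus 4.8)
The plan is to mirror the proof of Proposition~\ref{prop:finite} almost verbatim, with $q$ replaced by $q-1$ throughout, since $T$-negligibility (Definition~\ref{def:Tnegligible}) is phrased in terms of $\mu+(q-1)\lambda={\bf i}_j+1$ rather than $\mu+q\lambda={\bf i}_j+1$; this substitution is exactly what lowers the degree threshold from $(q+2)(g(\mathcal F)+1)-3$ to $(q+1)(g(\mathcal F)+1)-3$. First I would note that $\deg({\bf i}) \ge (q+1)(g(\mathcal F)+1)-3$ forces $\deg({\bf i}) \ge 2g(\mathcal F)-1$, so the theorem of Riemann--Roch gives $L({\bf i}) \subsetneq L({\bf i}+{\bf e}_j)$; thus we fall into the nontrivial branch of Definition~\ref{def:Tnegligible} and must establish its two conditions.

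For condition (1), since $H(P_j)$ has exactly $g(\mathcal F)$ gaps there is a nonzero $\lambda \in H(P_j)$ with $\lambda \le g(\mathcal F)+1$. For this $\lambda$ I would verify the degree estimate $\deg({\bf i}+(1-(q-1)\lambda){\bf e}_j)=\deg({\bf i})-(q-1)\lambda+1 \ge (q+1)(g(\mathcal F)+1)-3-(q-1)(g(\mathcal F)+1)+1 = 2g(\mathcal F)$, where the inequality uses both the hypothesis on $\deg({\bf i})$ and $\lambda \le g(\mathcal F)+1$. Riemann--Roch then supplies $g \in L({\bf i}+(1-(q-1)\lambda){\bf e}_j)$ with $-v_{P_j}(g)={\bf i}_j+1-(q-1)\lambda$, so by Definition~\ref{def:hi} the value $\mu:={\bf i}_j+1-(q-1)\lambda$ lies in $H_{\bf i}(P_j)$, and $\mu+(q-1)\lambda={\bf i}_j+1$ gives condition (1).

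Condition (2) is then immediate: the standing hypothesis of the proposition, applied to this particular $\lambda \in H(P_j)$, produces a function $f \in L(\lambda P_j)\setminus L((\lambda-1)P_j)$ with $f(Q)\in T$ for all $Q \in \mathcal Q$, which is exactly what condition (2) demands. Hence the pair $({\bf i},{\bf i}+{\bf e}_j)$ is $T$-negligible. The only delicate point is the degree arithmetic in the second paragraph, where the smaller multiplier $(q-1)\lambda$ must still clear the $2g(\mathcal F)$ threshold under the reduced bound; everything else is bookkeeping. I do not expect a genuine obstacle here, precisely because the extra hypothesis is designed to hand over condition (2) for free, thereby avoiding the surjectivity argument and the large parameter $\Lambda$ that forced the much weaker threshold $M_T$ in Proposition~\ref{prop:Tfinite}.
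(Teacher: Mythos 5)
Your proposal is correct and follows essentially the same route as the paper's own proof: Riemann--Roch to get $L({\bf i}) \subsetneq L({\bf i}+{\bf e}_j)$, the gap-counting argument to find $\lambda \in H(P_j)\backslash\{0\}$ with $\lambda \le g(\mathcal F)+1$, the degree estimate $\deg({\bf i}+(1-(q-1)\lambda){\bf e}_j) \ge 2g(\mathcal F)$ to produce $\mu = {\bf i}_j+1-(q-1)\lambda \in H_{\bf i}(P_j)$, and the standing hypothesis to supply the function $f$ required by condition (2) of Definition~\ref{def:Tnegligible}. The only difference is cosmetic: you spell out the degree arithmetic that the paper leaves implicit.
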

\begin{proof}
Suppose that $\deg({\bf i}) \ge (q+1)(g(\mathcal F)+1)-3$. Since then $\deg({\bf i})\ge 2g(\mathcal F)-1$, it follows from the theorem of Riemann--Roch that $L({\bf i}) \subsetneq L({\bf i}+{\bf e}_j)$. As in the proof of Proposition \ref{prop:finite} we can conclude that there exists $\lambda \in H(P_j)\backslash\{0\}$ with $\lambda \le g(\mathcal F)+1$. This implies that $\deg({\bf i}+(1 - (q-1)\lambda){\bf e}_j) \ge 2g(\mathcal F)$, so applying the theorem of Riemann--Roch again, we see that there exists a function $g \in L({\bf i}+(1 - (q-1)\lambda){\bf e}_j)$ such that $-v_{P_j}(g)={\bf i}_j+1-(q-1)\lambda$. By Definition \ref{def:hi}, we see that ${\bf i}_j+1-(q-1)\lambda \in H_{\bf i}(P_j)$. Furthermore by assumption, there exists $f \in L(\lambda P_j)\backslash L((\lambda-1)P_j)$ such that $f(Q) \in T$ for all $Q \in {\mathcal Q}$. Therefore, by Definition \ref{def:Tnegligible}, the proposition follows.\qed
\end{proof}

As in the previous section, we can refine the above statement:

\begin{corollary}\label{cor:Thowfar}
Let $\lambda_j$ denote the smallest nonzero element of $H(P_j)$. Suppose that for any $\lambda \in H(P_j)$ there exists $f \in L(\lambda P_j)\backslash L((\lambda-1)P_j)$ such that $f(Q) \in T$ for all $Q \in {\mathcal Q}$. Then the pair $({\bf i},{\bf i}+{\bf e}_j)$ is $T$-negligible if $\deg({\bf i}) \ge (q-1)\lambda_j+2g(\mathcal F)-1$.
\end{corollary}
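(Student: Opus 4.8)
The plan is to mimic the proof of Corollary \ref{cor:howfar}, which sharpened Proposition \ref{prop:finite} to Proposition's bound $q\lambda_j + 2g(\mathcal F)-1$ by using the \emph{actual} smallest nonzero element $\lambda_j$ of $H(P_j)$ rather than the crude estimate $\lambda \le g(\mathcal F)+1$. Here I want the analogous sharpening of Proposition \ref{prop:Tfinite2}, so the $(q+1)(g(\mathcal F)+1)-3$ bound should be replaced by $(q-1)\lambda_j + 2g(\mathcal F)-1$, under the same standing hypothesis that every $\lambda \in H(P_j)$ is realized by some $f \in L(\lambda P_j)\backslash L((\lambda-1)P_j)$ with $f(Q) \in T$ for all $Q \in \mathcal Q$.

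First I would assume $\deg({\bf i}) \ge (q-1)\lambda_j + 2g(\mathcal F)-1$. Since $\lambda_j \ge 1$, this already forces $\deg({\bf i}) \ge 2g(\mathcal F)-1$, so Riemann--Roch gives $L({\bf i}) \subsetneq L({\bf i}+{\bf e}_j)$, handling the non-degeneracy requirement in Definition \ref{def:Tnegligible}. Next, the key arithmetic step: taking $\lambda = \lambda_j \in H(P_j)\backslash\{0\}$, I compute that $\deg\bigl({\bf i}+(1-(q-1)\lambda_j){\bf e}_j\bigr) = \deg({\bf i}) + 1 - (q-1)\lambda_j \ge 2g(\mathcal F)$. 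Applying Riemann--Roch to this divisor produces a function $g \in L\bigl({\bf i}+(1-(q-1)\lambda_j){\bf e}_j\bigr)$ with $-v_{P_j}(g) = {\bf i}_j + 1 - (q-1)\lambda_j$, and by Definition \ref{def:hi} this value lies in $H_{\bf i}(P_j)$. Writing $\mu = {\bf i}_j+1-(q-1)\lambda_j$, we have $\mu + (q-1)\lambda_j = {\bf i}_j+1$, which is exactly condition (1) of $T$-negligibility.

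It remains to verify condition (2), and this is where the standing hypothesis does the work: applied to $\lambda = \lambda_j$, it directly supplies an $f \in L(\lambda_j P_j)\backslash L((\lambda_j-1)P_j)$ with $f(Q) \in T$ for all $Q \in \mathcal Q$. Thus both conditions of Definition \ref{def:Tnegligible} hold and the pair $({\bf i},{\bf i}+{\bf e}_j)$ is $T$-negligible. I anticipate no serious obstacle here, since the proof is a near-verbatim transcription of Proposition \ref{prop:Tfinite2} with $\lambda_j$ substituted for the bound $g(\mathcal F)+1$; the only point requiring a moment's care is checking that the sharper degree hypothesis still guarantees $\deg\bigl({\bf i}+(1-(q-1)\lambda_j){\bf e}_j\bigr) \ge 2g(\mathcal F)$ so that the second Riemann--Roch application is legitimate, and that the coefficient $(q-1)$ (rather than $q$, as in the non-$T$ setting of Corollary \ref{cor:howfar}) is carried consistently through the exponent of $f$ and the definition of $\mu$.
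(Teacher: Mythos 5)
Your proof is correct and is precisely the argument the paper intends: the corollary is stated as the refinement obtained by running the proof of Proposition \ref{prop:Tfinite2} with the actual smallest nonzero element $\lambda_j$ in place of the crude bound $\lambda \le g(\mathcal F)+1$, which is exactly what you do. The degree bookkeeping ($\deg({\bf i}+(1-(q-1)\lambda_j){\bf e}_j) \ge 2g(\mathcal F)$) and the use of the standing hypothesis at $\lambda = \lambda_j$ to secure condition (2) of Definition \ref{def:Tnegligible} are both handled correctly.
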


Now we come to the refinement of Theorem \ref{thm:main}.

\begin{theorem}\label{thm:Tmain}
Define $\Lambda=\#{\mathcal Q}+2g({\mathcal F})$ and $M_T=(q-1)\Lambda+2g(\mathcal F)-1$. Let ${\bf i}^{(-1)},\dots,{\bf i}^{(M_T)}$ be a sequence of $n$-tuples such that:
\begin{enumerate}
\item $\deg({\bf i}^{(-1)})=-1$,
\item for any $k$ there exists a $j$ such that ${\bf i}^{(k)}-{\bf i}^{(k-1)}={\bf e}_j$.
\end{enumerate}
Then $\#{\mathcal Q} \le \sum_{k=0}^{M_T} \delta_T({\bf i}^{(k-1)},{\bf i}^{(k)})$.
\end{theorem}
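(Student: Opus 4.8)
**The plan is to mirror the proof of Theorem \ref{thm:main}, substituting the $T$-negligible machinery for the ordinary negligible one.** The key observation is that Theorem \ref{thm:Tmain} is structurally identical to Theorem \ref{thm:main}, with three replacements: $\delta$ becomes $\delta_T$, Lemma \ref{lem:reduce} becomes Lemma \ref{lem:Treduce}, Proposition \ref{prop:finite} becomes Proposition \ref{prop:Tfinite}, and the conclusion counts $\#\mathcal{Q}$ rather than $N(\mathcal{F})-n$. Since $\mathcal{Q}$ is now a subset of the rational places avoiding $P_1,\dots,P_n$, the evaluation map $\Ev_{\mathcal{Q}}: L(G) \to \F^{\#\mathcal{Q}}$ and the codes $C_{\mathcal{Q}}(\mathbf{i})=\Ev_{\mathcal{Q}}(L(\mathbf{i}))$ are defined exactly as before.

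First I would establish the three analogous claims. The surjectivity claim (now $C_{\mathcal{Q}}(G)=\F^{\#\mathcal{Q}}$ whenever $\deg(G)\ge \#\mathcal{Q}+2g(\mathcal{F})-1$) follows verbatim from the Riemann--Roch argument, using that $\ker(\Ev_{\mathcal{Q}})=L(G-\sum_{Q\in\mathcal{Q}}Q)$. The base case $\dim(C_{\mathcal{Q}}(\mathbf{i}^{(-1)}))=0$ is immediate since $L(G)=\{0\}$ for divisors of negative degree. The crucial inductive step is $\dim(C_{\mathcal{Q}}(\mathbf{i}^{(k)}))\le\dim(C_{\mathcal{Q}}(\mathbf{i}^{(k-1)}))+\delta_T(\mathbf{i}^{(k-1)},\mathbf{i}^{(k)})$; this is trivial when $\delta_T=1$, so one assumes $\delta_T=0$ and invokes Lemma \ref{lem:Treduce} to produce $f\in L(\lambda\mathbf{e}_j)$ and $g\in L(\mathbf{i}^{(k-1)})$ with $f^{q-1}g\in L(\mathbf{i}^{(k)})\setminus L(\mathbf{i}^{(k-1)})$ and $f(Q)\in T$ for all $Q\in\mathcal{Q}$.

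The heart of the argument—and the one genuine departure from Theorem \ref{thm:main}—is showing that the new generator contributes nothing to the code. In Theorem \ref{thm:main} one used $f^q=f$ coordinatewise via the Frobenius-type identity $\Ev_{\mathcal{Q}}(f^q g)=\Ev_{\mathcal{Q}}(fg)$, which holds because $a^q=a$ for all $a\in\F$. Here the exponent is $q-1$, so this no longer applies directly; instead I would use the condition $f(Q)\in T=\F\setminus\{0\}$, which guarantees $f(Q)^{q-1}=1$ for every $Q\in\mathcal{Q}$. Hence $\Ev_{\mathcal{Q}}(f^{q-1}g)=\Ev_{\mathcal{Q}}(g)$, and since $g\in L(\mathbf{i}^{(k-1)})$ we obtain $\Ev_{\mathcal{Q}}(f^{q-1}g)\in C_{\mathcal{Q}}(\mathbf{i}^{(k-1)})$. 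This is exactly why the defining condition of $T$-negligibility demands $f$ be nonvanishing on $\mathcal{Q}$, and it is the step requiring the most care.

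Finally I would assemble the pieces. The inductive claims give $\dim(C_{\mathcal{Q}}(\mathbf{i}^{(M_T)}))\le\sum_{k=0}^{M_T}\delta_T(\mathbf{i}^{(k-1)},\mathbf{i}^{(k)})$. To identify the left-hand side with $\#\mathcal{Q}$, I would argue as in Theorem \ref{thm:main}: by Proposition \ref{prop:Tfinite} every pair $(\mathbf{i},\mathbf{i}+\mathbf{e}_j)$ with $\deg(\mathbf{i})\ge M_T$ is $T$-negligible, so the same coordinatewise reasoning shows $\dim(C_{\mathcal{Q}}(\mathbf{i}^{(M_T)}))=\dim(C_{\mathcal{Q}}(\mathbf{i}^{(M_T)}+l\mathbf{e}_j))$ for all $l\ge 0$. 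Pushing the degree past $\#\mathcal{Q}+2g(\mathcal{F})-1$ and invoking the surjectivity claim yields $\dim(C_{\mathcal{Q}}(\mathbf{i}^{(M_T)}))=\#\mathcal{Q}$, completing the proof. The main obstacle, as noted, is the replacement of the Frobenius identity by the nonvanishing argument $f(Q)^{q-1}=1$; everything else transfers routinely from the proof of Theorem \ref{thm:main}.
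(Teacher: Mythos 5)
Your proposal is correct and follows exactly the paper's own approach: the paper's proof likewise reduces everything to the proof of Theorem \ref{thm:main}, isolating as the only new ingredient the inductive step where Lemma \ref{lem:Treduce} supplies $f$ with $f(Q)\in T$ for all $Q\in\mathcal{Q}$, so that $\Ev_{\mathcal Q}(f^{q-1}g)=\Ev_{\mathcal Q}(g)\in C_{\mathcal Q}({\bf i}^{(k-1)})$ replaces the Frobenius identity. Your identification of this as the one genuine departure, and your assembly of the remaining claims via Proposition \ref{prop:Tfinite} and the surjectivity argument, match the paper's reasoning precisely.
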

\begin{proof}
The proof is similar to that of Theorem \ref{thm:main}. All the reasoning is similar apart from the proof of the following claim: 

\bigskip
\noindent
For any $k\ge 0$ we have $\dim(C_{\mathcal Q}({\bf i}^{(k)})) \le \dim(C_{\mathcal Q}({\bf i}^{(k-1)}))+\delta_T({\bf i}^{(k-1)},{\bf i}^{(k)})$.

\bigskip
\noindent
This is clear if $\delta_T({\bf i}^{(k-1)},{\bf i}^{(k)})=1$, so we may assume that $\delta_T({\bf i}^{(k-1)},{\bf i}^{(k)})=0$. We may apply Lemma \ref{lem:Treduce} to conclude that there exist $f \in L(\lambda {\bf e}_j)$ for some $\lambda>0$ and $g \in L({\bf i}^{(k-1)})$ such that
$f^{q-1} g \in L({\bf i}^{(k)})\backslash L({\bf i}^{(k-1)})$. Moreover, we may assume that $f(Q) \in T$ for all $Q \in {\mathcal Q}.$ Since $\alpha^{q-1}=1$ for all $\alpha \in T$, this implies $f(Q)^{q-1}=1$ for all $Q \in {\mathcal Q}$. On the level of codes we have, as in Theorem \ref{thm:main}, that the code $C_{\mathcal Q}({\bf i}^{(k)})$ is generated as a vector space by the vectors of $C_{\mathcal Q}({\bf i}^{(k-1)})$ and the vector $\Ev_{\mathcal Q}(f^{q-1} g)$. However, we have $\Ev_{\mathcal Q}(f^{q-1} g)=\Ev_{\mathcal Q}(g) \in C_{\mathcal Q}({\bf i}^{(k-1)})$. The claim now follows and the proof of the theorem can be concluded as that of Theorem \ref{thm:main}.\qed
\end{proof}

In case $n=1$ and the hypotheses from Proposition \ref{prop:Tfinite2} are satisfied, we obtain the following result:

\begin{corollary}\label{cor:Tnis1}
Suppose that for any $\lambda \in H(P_1)$ there exists $f \in L(\lambda P_1)\backslash L((\lambda-1)P_1)$ such that $f(Q) \in T$ for all $Q \in {\mathcal Q}$. Then
$$\#{\mathcal Q} \le \#H(P_1)\backslash ((q-1)H^*(P_1)+H(P_1)).$$
\end{corollary}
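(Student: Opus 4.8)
The plan is to specialize Theorem \ref{thm:Tmain} to the case $n=1$ and then identify the resulting sum with the claimed cardinality. When $n=1$, condition (2) of that theorem forces ${\bf i}^{(k)}-{\bf i}^{(k-1)}={\bf e}_1$ at every step, so the only admissible sequence is ${\bf i}^{(k)}=k$ for $-1\le k\le M_T$, viewing a $1$-tuple as an integer. Theorem \ref{thm:Tmain} then yields $\#{\mathcal Q} \le \sum_{k=0}^{M_T} \delta_T(k-1,k)$, and the remaining work is to evaluate this sum.

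The key simplification is that for $n=1$ the set $H_{\bf i}(P_1)$ has only one coordinate, on which it does not depend, so directly from Definition \ref{def:hi} one has $H_{k-1}(P_1)=H(P_1)$ for every $k$. Moreover, the hypothesis of the corollary is exactly what is needed so that condition (2) in Definition \ref{def:Tnegligible} is automatically satisfied for every $\lambda\in H(P_1)$. Unwinding Definition \ref{def:Tnegligible} under these observations, I would show that the pair $(k-1,k)$ is $T$-negligible if and only if either $k\notin H(P_1)$ (equivalently $L((k-1)P_1)=L(kP_1)$) or there exist $\lambda\in H(P_1)\backslash\{0\}$ and $\mu\in H(P_1)$ with $\mu+(q-1)\lambda=k$. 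Consequently $\delta_T(k-1,k)=1$ precisely when $k\in H(P_1)$ and $k\notin (q-1)H^*(P_1)+H(P_1)$, that is, precisely when $k\in H(P_1)\backslash((q-1)H^*(P_1)+H(P_1))$.

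It then remains to check that summing over $0\le k\le M_T$ captures every such $k$. Here I would invoke Corollary \ref{cor:Thowfar}: under the stated hypothesis, $\delta_T(k-1,k)=0$ whenever $\deg(k-1)=k-1\ge (q-1)\lambda_1+2g(\mathcal F)-1$, where $\lambda_1$ is the smallest nonzero element of $H(P_1)$. Hence no element of $H(P_1)\backslash((q-1)H^*(P_1)+H(P_1))$ exceeds this bound, which is at most $M_T$. It follows that $\sum_{k=0}^{M_T}\delta_T(k-1,k)=\#(H(P_1)\backslash((q-1)H^*(P_1)+H(P_1)))$, and combining this with Theorem \ref{thm:Tmain} gives the claim.

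I expect the main obstacle to be the bookkeeping in the identification step: carefully matching the two clauses of $T$-negligibility with the complement of $(q-1)H^*(P_1)+H(P_1)$ inside $H(P_1)$, and in particular verifying that the gap case $L((k-1)P_1)=L(kP_1)$ corresponds exactly to $k\notin H(P_1)$, so that gaps contribute $0$ to the sum and are already excluded on the right-hand side. Everything else is a routine specialization of the results proved above.
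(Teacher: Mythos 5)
Your proposal is correct and follows essentially the same route as the paper's own (much terser) proof: specialize Theorem \ref{thm:Tmain} to $n=1$, observe the sequence $-1,0,1,2,\dots$ is forced, and identify the non-$T$-negligible pairs $(k-1,k)$ with the elements of $H(P_1)\backslash((q-1)H^*(P_1)+H(P_1))$. If anything, your treatment is more careful than the paper's, which states ``$T$-negligible if and only if $k\in(q-1)H^*(P_1)+H(P_1)$'' and thereby glosses over the gap case $k\notin H(P_1)$ that you handle explicitly; note also that only the inequality $\sum_k\delta_T(k-1,k)\le\#\bigl(H(P_1)\backslash((q-1)H^*(P_1)+H(P_1))\bigr)$ is needed, which follows from your identification without the range argument via Corollary \ref{cor:Thowfar}.
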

\begin{proof}
Since $n=1$, the only sequence we can choose is $-1,0,1,2,\dots$. However, under the stated assumptions, a pair $(k-1,k)$ is $T$-negligible if and only if $k \in (q-1)H^*(P_1)+H(P_1)$.\qed
\end{proof}

We will now give some examples.

\begin{example}
This example is a continuation of Example \ref{ex:klein}. In particular we will use the same notation as in that example. We choose $P=P_1$ and $\mathcal Q$ to be the set of all rational places $Q$ satisfying $x(Q) \in T$ and $y(Q) \in T$. Using the divisors for $x$ and $y$ in Example \ref{ex:klein}, we see that the only rational places not in $\mathcal Q$ are $P_1$, $P_2$ and $P_3$.

Using Equation (\ref{eq:LspaceKlein}), we see that the conditions in Corollary \ref{cor:Tnis1} are satisfied for our choice of $\mathcal Q$. Therefore we find that

$$\#{\mathcal Q} \le \# H(P_1)\backslash ( 7H^*(P_1)+H(P_1) )=\#\{0,3,5,\dots,20,22,23,25\}=21.$$
Also counting the rational points $P_1$, $P_2$ and $P_3$ we find that $N(\mathcal F_1) \le 24$. Since $N(\mathcal F_1) = 24$, Corollary \ref{cor:Tnis1} gives a tight bound in this case.
\end{example}

\begin{example}
This example is a continuation of Example \ref{ex:second}. There we have seen that $N(\FF_2)\le 31$. This can also been seen using Corollary \ref{cor:Tnis1}. From Equation (\ref{eq:defeq}), we see that there are at most $4$ rational places $P$ of $\FF_2$ with $x(P)=0$ or $y(P)=0$ and a unique pole of $x$ and $y$. On the other hand applying Corollary \ref{cor:Tnis1} to the semigroup $H(P_2)=\langle 4,5 \rangle$, we get that $\#\mathcal{Q} \le 26$. Therefore $N(\FF_2) \le 4+1+26=31$.
\end{example}

\end{document}